\crefname{remark}{Remark}{Remarks}
\renewcommand{\epsilon}{\varepsilon}
\NewDocumentCommand \qtext {m} {\quad\text{#1}\quad}
\NewDocumentCommand \Real {} {\mathbb{R}}
\NewDocumentCommand \Natural {} {\mathbb{N}}
\NewDocumentCommand{\Tr}{s}{\IfBooleanTF{#1}{\vphantom{\intercal}}{\intercal}}
\NewDocumentCommand{\Mx}{O{} m !g t' t"}{
  \bm{#1{\mathbf{\MakeUppercase{#2}}}}%
  \IfValueT{#3}{_{#3}}%
  \IfBooleanTF{#4}{^{\Tr}}{%
    \IfBooleanT{#5}{^{\Tr*}}}%
}
\NewDocumentCommand \V { O{} m } {{\bm{#1\mathbf{\MakeLowercase{#2}}}}} 
\NewDocumentCommand \B { t' t~ t*} {\Mx[\IfBooleanT{#2}{\tilde}]{B}\IfBooleanT{#1}{^{\intercal}}\IfBooleanT{#3}{_{\ast}}}
\NewDocumentCommand \A { } {\Mx{A}}
\NewDocumentCommand \Om { t! } {\Mx[\IfBooleanT{#1}{\bar}]{\Omega}}
\NewDocumentCommand \Vp { } {\V{p}}
\NewDocumentCommand \Multi { } {\text{\sc multinomial}}
\NewDocumentCommand \RandSample { } {\text{\sc randsample}}
\NewDocumentCommand \HybridSample { } {\text{\sc hybridsample}}
\NewDocumentCommand \Prob { } {\text{\rm Pr}}
\NewDocumentCommand \Exp { } {\mathbb{E}}
\DeclareDocumentCommand \det { } {\text{\rm\sffamily det}}
\NewDocumentCommand \pdet { } {p_{\text{\rm\sffamily det}}} 
\NewDocumentCommand \DetSet { } {\mathcal{D}} %
\NewDocumentCommand{\resid}{}{\mathcal{R}} 
\NewDocumentCommand{\Sdet}{}{\Mx{D}}
\NewDocumentCommand{\Srnd}{}{\Mx[\tilde]{S}}
\newcommand{\trans}{\intercal}
\DeclareMathOperator*{\argmin}{arg\,min}
\DeclareMathOperator{\rank}{rank}
\newcommand{\R}{\mathbb{R}}
\newcommand{\E}{\mathbb{E}}
\newcommand{\Var}{\mathrm{Var}}
\newcommand{\Bperp}{\ensuremath{\Mx{B}^{\perp}}}
\newcommand{\UA}{\ensuremath{\Mx{U}{\Mx{A}}}}
\newcommand{\Xopt}{\ensuremath{\Mx{X}_{\text{\rm\sffamily opt}}}}
\newcommand{\Xtildeopt}{\ensuremath{\widetilde{\Mx{X}}_{\text{\rm\sffamily opt}}}}
\newcommand{\Ytildeopt}{\ensuremath{\widetilde{\Mx{Y}}_{\text{\rm\sffamily opt}}}}
\newcommand{\Yopt}{\ensuremath{\Mx{Y}_{\text{\rm\sffamily opt}}}}
\title{Revisiting Approximate Leverage Score Sketching for Matrix Least Squares\thanks{v1 of this manuscript was intended to provide a detailed and standalone derivation of a result that appeared in~\cite[Appendix A]{LaKo22}. Subsequent versions provide improved bounds and expand the analysis to hybrid sampling.}}
\author{%
  Brett W. Larsen%
  \thanks{Redwood City, CA (\email{brett.larsen14@gmail.com})}
  \and Tamara~G.~Kolda%
  \thanks{MathSci.ai, Dublin, CA (\email{tgkolda@mathsci.ai})}
}
\begin{document}

\maketitle

\begin{abstract}
    We revisit the problem of sketching using approximate leverage scores
    for matrix least squares problems of the form
    $\| \A\Mx{X}-\Mx{B} \|_F^2$ where the design matrix
    $\A \in \Real^{N \times r}$ is tall and skinny with  $N \gg r$.
    We derive the theoretical results from first principles and clarify the relation to previously stated bounds, improving
    some constants along the way. 
    One can characterize the utility of a sketching scheme according
    to the number of samples it needs for an $\varepsilon$-accurate solution 
    with high probability.
    Assuming $\varepsilon$ is suitably small, we will show that 
    approximate leverage score sampling requires $4r/(\beta\delta\epsilon)$
    samples, where $\delta$ is the failure probability and 
    $\beta \in (0,1]$ is a measure of the quality of the approximate 
    leverage scores such that $\beta=1$ corresponds to using exact leverage 
    scores.
    In cases where a few approximate leverage scores are very large 
    (summing to $\pdet$),
    we also show that using a hybrid deterministic and random sampling
    scheme reduces the required number
    of samples by a factor  of $1/(1-\pdet)$.
\end{abstract}

\begin{keywords}
    matrix sketching, leverage score sampling, randomized numerical linear algebra (RandNLA)
\end{keywords}

\section{Introduction}%
\label{sec:intro}%

Approximating the solution of an overdetermined system of linear equations is a fundamental problem in data science and statistics.
This is often accomplished via the method of least squares which finds the matrix $\Xopt := \argmin_{\Mx{X}} \| \A \Mx{X} -\Mx{B} \|_F^2$.
Here we consider the problem of sketching this matrix least squares problem
by row sampling according to some probability distribution $\V{p}$.
The sketched matrices $\Mx[\widetilde]{A}$ and $\Mx[\widetilde]{B}$
are weighted subsets of the rows of the original matrices.
Our aim is for the solution to the sketched problem $\Xtildeopt := \argmin \| \Mx[\widetilde]{A}\Mx{X}-\Mx[\widetilde]{B} \|_F^2$ to be $\epsilon$-accurate,
meaning that its residual satisfies the following property:
\begin{equation*}
    \| \A\Xtildeopt-\Mx{B} \|_F^2
    \leq (1+\epsilon)  \| \A \Xopt -\Mx{B} \|_F^2.
\end{equation*}

If we sample proportional to approximate leverage scores, then we can obtain an $\epsilon$-accurate solution with high probability using $O(r/(\beta \epsilon))$ samples, where $\beta \in (0,1]$ is a measure of the quality of the leverage score estimates. Specifically,
for $\A \in \Real^{N \times r}$,
we prove that this occurs with probability at least $1- \delta$  provided the number of sampled rows is
\begin{displaymath}
    s=({r}/{\beta}) \max \set{ C \log (2r/\delta), {4}/(\delta \epsilon) }
\end{displaymath}
where $C \approx 20.9080$.
If $\epsilon$ is sufficiently small so that $\epsilon^{-1} \geq C \delta \log(r/\delta)$, 
we have $s=4r/(\beta\delta\varepsilon)$.

We also consider a hybrid sampling scheme where we deterministically include the rows with the largest approximate leverage scores and then sample from the remaining rows as before.
If $d$ is the number of deterministically included rows and $\pdet$ is the sum of the probabilities corresponding to those rows,
then the number of samples is
\begin{displaymath}
    s = d + \frac{1}{(1-\pdet)} (r/\beta)\max \set{ 2C \log (2r/\delta), {4}/(\delta \epsilon) }.
\end{displaymath}
This effectively reduces the number of samples by a factor of $1/(1-\pdet)$, which can make a major difference in practical implementations.

This note provides complete proofs of these results with several motivations in mind.
The first motivation is that this work provides the foundation for leverage-based sampling for low-rank tensor decomposition as described in \cite{LaKo22}.
To the best of our knowledge, the precise result stated in Theorem 6 of that paper was new and thus a condensed outline of the proof was provided in the appendix.
While the original version of this note \cite{arXiv-LaKo22} provided an extended proof,
here we provide a revised version with improved constants and a more complete derivation.
The second motivation is that although many of the steps are from previous work or primarily extend results to the matrix case, we did not find a concise statement of the full logic of this style of sketching least squares results.
The third is proving the result for the hybrid sampling scheme
in the case of approximate leverage scores, extending the results of Hayashi et al.~\cite{hayashi2025randomized} in the case of exact leverage scores.
We show that the hybrid sampling scheme improves the sample complexity by a factor of $1/(1-\pdet)$.

\subsection{Problem Setup and Notation}
\label{sec:problem-setup}

Consider the overdetermined matrix least squares problem defined by the \emph{design matrix} $\A \in \R^{N \times r}$,
with $N > r$ and  $\rank(\A) = r$, and the matrix $\Mx{B} \in \R^{N \times n}$.
Define the solution and optimal squared residual to be
\begin{equation}\label{eq:resid}
    \Xopt \triangleq \argmin_{\Mx{X} \in \R^{r \times n}} \|\A \Mx{X} - \Mx{B}\|_F^2
    \qtext{and}
    \resid^2 \triangleq \|\A \Xopt - \Mx{B}\|_F^2.
\end{equation}
The SVD of the design matrix is $\A = \UA \Mx{\Sigma}{\A} {\Mx{V}{\!\!\A}}^{\!\!\Tr}$,
so $\UA$ is an orthonormal basis for the $r$-dimensional column space of $\A$.
Let $\UA^{\perp}$ be an orthonormal basis for the $(N-r)$-dimensional
subspace orthogonal to the column space of $\A$.
We define $\Bperp$ to be the projection of the the columns of $\Mx{B}$ onto this orthogonal subspace.
This matrix is important because the residual of the least squares problem is its Frobenious norm;
$\Mx{X}$ can be chosen so that each column in $\A \Mx{X}$ exactly matches the part of the corresponding column in $\Mx{B}$
in the column space of $\A$ but cannot, by definition, match anything in the range spanned by $\UA^{\perp}$:
\begin{equation}\label{eq:bperp}
    \Bperp \triangleq \UA^{\perp} \UA^{\perp \trans} \Mx{B}
    \quad \Rightarrow \quad
    \Mx{B} = \A \Xopt + \Bperp
    \qtext{and}
    \resid^2
    = \|\Bperp\|_F^2.
\end{equation}

\subsection{Outline}
\Cref{sec:proof-structure} reviews the standard theory approach for sketching least squares problems \cite{DrMaMaWo12}, reducing the problem to showing that two structural conditions holds.
\Cref{sec:proof-that-leverage} describes approximate leverage score sampling in detail and proves that the two structural conditions
hold in that case.
\Cref{sec:hybrid-sampling} describes the hybrid deterministic and random sampling approach and proves that the two structural
conditions also hold in that case.

\section{Analysis for Generic Sketching Matrix}
\label{sec:proof-structure}%
An $s \times N$ sketching matrix $\Mx{S}$ is applied to both $\A$ and $\Mx{B}$ to yield the sketched problem:
\begin{equation}\label{eq:lsq-sketched}
    \min_{\Mx{X} \in \R^{r \times n}} \|\Mx{S}\A \Mx{X} - \Mx{S}\Mx{B}\|_F^2.
\end{equation}
Following the technique of Drineas et al.~\cite{drineas2011faster}, we split the proof into two parts.
In the remainder of this section, we prove bounds on both the residual and the solution of the sketched system
for a \emph{generic} sketching matrix $\Mx{S}$ that satisfies certain \emph{structural conditions}.
The proofs follow deterministically and do not consider the random aspect of the sketching matrix generation.

In subsequent sections, we consider two different methods for generating the sketching matrix $\Mx{S}$ and show that these satisfy the structural conditions with high probability and give bounds on the number of samples required.

\subsection{Structural Conditions}
\label{sec:struct-conds}%

We begin by assuming that our design matrix satisfies two \emph{structural conditions}.
As defined in \cref{sec:problem-setup},
the matrix $\UA$ is an orthonormal basis for the column space of $\A$, so its singular values are all equal to 1.
The first structural condition says that the sketching matrix $\Mx{S}$ approximately maintains the orthogonality of $\UA$ in that the smallest singular value does not become too small:
\begin{equation}
    \tag{SC1} \label{eq:sc1}
    \sigma^2_{\text{min}}(\Mx{S} \UA) \geq 1/\sqrt{2}.
\end{equation}
The second structural condition has to do with the effect of sketching on residual. By definition of $\Bperp$ in \cref{eq:bperp}, its columns are orthogonal to the column space of $\A$, so $\UA'\Bperp = \Mx{0}$.
The second structural condition requires that after sketching, this orthogonality is approximately maintained in that the norm of the product $\UA' \Mx{S}' \Mx{S} \Bperp$ is small:
\begin{equation}
    \tag{SC2} \label{eq:sc2}
    \|\UA' \Mx{S}' \Mx{S} \Bperp \|_F^2 \leq \epsilon \resid^2/2.
\end{equation}
\subsection{Theory Under Structural Conditions}
\label{sec:prop-sketch-matr}%

The first result is analogous to Drineas et al.~\cite[Lemma~1]{drineas2011faster}
except that we prove it for the \emph{matrix} least squares case.
This result shows that if the sketching matrix $\Mx{S}$ satisfies the structural conditions,
then the solution to the sketched problem has both a small residual and is $\varepsilon$-accurate.
\begin{theorem}\label{thm:sketch-bound}
    For the overdetermined least squares problem \cref{eq:lsq-sketched}, assume
    the sketch matrix $\Mx{S}$ satisfies \cref{eq:sc1,eq:sc2} for some
    $\epsilon \in (0, 1)$.  Then the solution to the sketched problem,
    denoted $\Xtildeopt$, satisfies the following two bounds:
    \begin{align*}
        \|\A \Xtildeopt - \Mx{B} \|_F^2 & \leq (1 + \epsilon) \|\A \Xopt - \Mx{B} \|_F^2, \qtext{and}                      \\
        \|\Xopt - \Xtildeopt \|_F^2     & \leq \frac{\epsilon \|\A \Xopt - \Mx{B} \|_F^2}{\sigma^2_{\text{\rm min}}(\A)} .
    \end{align*}
\end{theorem}

\begin{proof}
    We begin by rewriting the sketched regression problem:
    \begin{align*}
        \min_{\Mx{X} \in \R^{r \times n}} \|\Mx{S}\A \Mx{X} - \Mx{S}\Mx{B}\|_F^2
         & = \min_{\Mx{X} \in \R^{r \times n}} \|\Mx{S}\A \Mx{X} - \Mx{S}\A \Xopt + \Mx{S}\A \Xopt - \Mx{S}\Mx{B}\|_F^2 \\
         & = \min_{\Mx{X} \in \R^{r \times n}} \|\Mx{S}\A (\Mx{X} - \Xopt) - \Mx{S}(\A \Xopt + \Bperp)\|_F^2            \\
         & = \min_{\Mx{Y} \in \R^{r \times n}} \|\Mx{S}\UA (\Mx{Y} - \Yopt) - \Mx{S}\Bperp\|_F^2.
    \end{align*}
    In the last line, we reparameterize the matrices $\Mx{X}$ and
    $\Xopt$ in terms of the orthonormal basis $\UA$ such that
    $\UA \Mx{Y} = \A \Mx{X}$ and the analogous relationships hold for
    $\Xopt/\Yopt$ and $\Xtildeopt/\Ytildeopt$.
    The solution, $\Ytildeopt$, satisfies the normal equation, i.e.,
    \begin{equation*}
        (\Mx{S}\UA)^{\trans} \Mx{S}\UA (\Ytildeopt - \Yopt) = (\Mx{S}\UA)^{\trans} \Mx{S} \Bperp.
    \end{equation*}
    By \cref{eq:sc1}, we have that $\sigma_i((\Mx{S}\UA)^{\trans} \Mx{S}\UA) = \sigma_i^2(\Mx{S}\UA) \geq 1/\sqrt{2}$.
    Thus taking the norm squared of both sides,
    applying the structural conditions, and then the relation from the normal equation gives:
    \begin{equation*}
        \|\Ytildeopt - \Yopt\|_F^2 /2 \leq
        \|(\Mx{S}\UA)^{\trans} \Mx{S}\UA (\Ytildeopt - \Yopt)\|_F^2
        = \|(\Mx{S}\UA)^{\trans} \Mx{S} \Bperp\|_F^2.
    \end{equation*}
    Applying \cref{eq:sc2} to the right hand side of this inequality yields
    \begin{align}
        \notag
        \|\Ytildeopt - \Yopt\|_F^2 /2 \leq \|\UA' \Mx{S}' \Mx{S} \Bperp\|_F^2 & \leq \epsilon \resid^2/2, \\
        \label{eq:orthBound}
        \Longrightarrow \|\Ytildeopt - \Yopt\|_F^2                            & \leq \epsilon \resid^2.
    \end{align}
    We can then obtain the desired result on the residual:
    \begin{align*}
        \| \Mx{B} - \A \Xtildeopt \|_F^2 & = \|\Mx{B} - \A \Xopt + \A \Xopt - \A \Xtildeopt \|_F^2,                       \\
                                         & = \|\Mx{B} - \A \Xopt \|_F^2 + \|\A (\Xopt -  \Xtildeopt) \|_F^2,              \\
                                         & = \|\Bperp\|_F^2 + \|\UA (\Yopt -  \Ytildeopt) \|_F^2
        = \|\Bperp\|_F^2 + \|\Yopt -  \Ytildeopt \|_F^2,                                                                  \\
                                         & \leq \resid^2 + \epsilon \resid^2 = (1 + \epsilon) \|\Mx{B} - \A \Xopt \|_F^2,
    \end{align*}
    where we have used in line 2 that the columns of $\Mx{B} - \A \Xopt = \Bperp$ are orthogonal to
    $\A$ times any vector and in the third line that $\UA$ is a matrix with orthonormal columns.

    Lastly, to obtain the bound on the solution recall that $\A(\Xopt - \Xtildeopt) = \UA(\Yopt - \Ytildeopt)$.
    Taking the norm of both sides we have:
    \begin{equation*}
        \sigma^2_{\text{min}}(\A) \|(\Xopt - \Xtildeopt)\|_F^2 \leq \|\A(\Xopt - \Xtildeopt)\|_F^2 = \|\UA(\Yopt - \Ytildeopt)\|_F^2.
    \end{equation*}
    Recall that we assume $\rank(\A)=r$ so that $\sigma_{\min}(\A)>0$.
    We then apply \cref{eq:orthBound} and rearrange to obtain the desired result:
    \begin{displaymath}
        \|(\Xopt - \Xtildeopt)\|_F^2
        \leq \frac{\|(\Yopt - \Ytildeopt)\|_F^2}{\sigma^2_{\text{min}}(\A)}
        \leq \frac{\epsilon \resid^2}{\sigma^2_{\text{min}}(\A)}.
    \end{displaymath}
\end{proof}

We can obtain a tighter bound on the solution matrix if we assume a constant fraction of
the columns of $\Mx{B}$ is in the column space of $\A$.  This is typically a
reasonable assumption for real-world least squares problems as the fit
is only practically interesting if this is true.

\begin{theorem}[Drineas et al.~\cite{drineas2011faster}]
    Assume that the conditions of \cref{thm:sketch-bound} hold.
    Furthermore, assume that $\|\UA \UA' \Mx{B} \|_F \geq \gamma \|\Mx{B}\|_F$ for some fixed $\gamma \in (0, 1]$.
    Then the solution to the sketched problem, denoted $\Xtildeopt$, satisfies the following bound:
    \begin{equation*}
        \|\Xopt - \Xtildeopt \|_F^2 \leq \epsilon \kappa(\A)^2 (\gamma^{-2} - 1) \|\Xopt\|_F^2,
    \end{equation*}
    where $\kappa(\A)$ denotes the condition number of the matrix $\A$.
\end{theorem}

\begin{proof}
    Start by bounding the residual squared using our assumption on $\Mx{B}$ as follows:
    \begin{align*}
        \|\A \Xopt - \Mx{B} \|_F^2 = \|\Bperp\|_F^2 & = \|\Mx{B}\|_F^2 - \|\UA \UA' \Mx{B} \|_F^2,                          \\
                                                    & \leq \gamma^{-2} \|\UA \UA' \Mx{B} \|_F^2 - \|\UA \UA' \Mx{B} \|_F^2, \\
                                                    & = (\gamma^{-2} - 1) \|\UA \UA' \Mx{B} \|_F^2,                         \\
                                                    & = (\gamma^{-2} - 1) \|\A \Xopt \|_F^2,                                \\
                                                    & \leq \sigma^2_{\text{max}}(\A) (\gamma^{-2} - 1) \|\Xopt \|_F^2.
    \end{align*}
    By the previous theorem, we have that $\|\Xopt - \Xtildeopt\|_F^2 \leq \frac{1}{\sigma^2_{\text{min}}(\A)} \epsilon \|\A \Xopt - \Mx{B} \|_F^2$. Plugging in the above inequality yields the desired result:
    \begin{align*}
        \|\Xopt - \Xtildeopt\|_F^2
         & \leq \frac{1}{\sigma^2_{\text{min}}(\A)} \epsilon \|\A \Xopt - \Mx{B} \|_F^2,                                \\
         & \leq \epsilon \frac{\sigma^2_{\text{max}}(\A)}{\sigma^2_{\text{min}}(\A)}  (\gamma^{-2} - 1) \|\Xopt \|_F^2, \\
         & = \epsilon \kappa(\A)^2 (\gamma^{-2} - 1) \|\Xopt\|_F^2 .
    \end{align*}
\end{proof}

\section{Approximate Leverage Score Sampling}
\label{sec:proof-that-leverage}%
In this section, we show that the methodology for choosing the columns
via the leverage-score-based sampling scheme satisfies the structural conditions.

\subsection{Weighted Sampling}
\label{sec:weighted-sampling}
Assuming we choose rows of a matrix according to some probability distribution, we consider how to weight the rows
so that the subsampled norm is unbiased.

\begin{definition}
    We say $\Vp \in [0,1]^N$ is a \emph{probability distribution} if
    $\sum_{i=1}^N p_i = 1$.
\end{definition}

\begin{definition}
    For a random variable $\xi \in [N]$, we say $\xi \sim \Multi(\Vp)$ if $\Vp \in [0,1]^N$ is a probability distribution and $\Prob(\xi = i) = p_i$.
\end{definition}

We can define a matrix that randomly samples rows from a matrix (or elements from a vector) with weights as follows.
The following definition can be found, e.g., in Woodruff \cite[Defn.~16]{Wo14} or Drineas and Mahoney \cite[Alg.~1]{DrMa17}.

\begin{definition}\label{def:randsample}
    We say $\Mx{S} \in \Real^{s \times N} \sim \RandSample(s,\Vp)$
    if $s \in \Natural$,
    $\Vp \in [0,1]^N$ is a probability distribution,
    and the entries on $\Om$ are defined as follows.
    Let $\xi^{(j)} \sim \Multi(\Vp)$ for $j=1,\dots,s$; then
    \begin{equation}\label{eq:S}
        \Mx{S}(j,i) =
        \begin{cases}
            \frac{1}{\sqrt{s p_i}} & \text{if } \xi^{(j)} = i, \\
            0                      & \text{otherwise},
        \end{cases}
        \qtext{for all}
        (j,i) \in [s] \times [N].
    \end{equation}
\end{definition}

\noindent It is straightforward to show that such a sampling matrix is unbiased, so we leave the proof of the next lemma as an exercise for the reader.

\begin{lemma}\label{lem:expectation}
    Let $\V{x} \in \Real^N$.
    Let $\Vp \in [0,1]^N$ be probability distribution such that $p_i > 0$ if $x_i \neq 0$
    and let $\Om \sim \RandSample(s,\Vp)$.
    Then $\Exp{ \| \Mx{S} \V{x} \|_2^2} = \|\V{x}\|_2^2$.
\end{lemma}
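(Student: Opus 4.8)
The plan is to compute $\Exp{\|\Mx{S}\V{x}\|_2^2}$ directly from the definition of the sampling matrix. First I would unpack Definition~\ref{def:randsample}: for each $j \in [s]$, row $j$ of $\Mx{S}$ is supported on the single column $\xi_j$, where $\xi_j \sim \Multi(\Vp)$, and has the value $1/\sqrt{s\,p_{\xi_j}}$ there. Consequently $(\Mx{S}\V{x})_j = x_{\xi_j}/\sqrt{s\,p_{\xi_j}}$, and summing the squares of the entries over $j$ gives
\begin{displaymath}
  \|\Mx{S}\V{x}\|_2^2 \;=\; \sum_{j=1}^{s} \frac{x_{\xi_j}^2}{s\,p_{\xi_j}}.
\end{displaymath}
The hypothesis that $p_i > 0$ whenever $x_i \neq 0$ ensures that whenever $x_{\xi_j} \neq 0$ the denominator $p_{\xi_j}$ is strictly positive, so each summand is well defined.

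Next I would take the expectation, using linearity over the $s$ rows. For a fixed $j$, since $\xi_j$ takes the value $i$ with probability $p_i$ and its support is $\{i : p_i > 0\}$,
\begin{displaymath}
  \Exp{\frac{x_{\xi_j}^2}{s\,p_{\xi_j}}}
  \;=\; \frac{1}{s}\sum_{i\,:\,p_i>0} p_i\cdot\frac{x_i^2}{p_i}
  \;=\; \frac{1}{s}\sum_{i\,:\,p_i>0} x_i^2
  \;=\; \frac{1}{s}\sum_{i=1}^{N} x_i^2
  \;=\; \frac{1}{s}\,\|\V{x}\|_2^2,
\end{displaymath}
where the third equality again invokes the hypothesis: any index $i$ with $p_i = 0$ must have $x_i = 0$ and hence contributes nothing to $\sum_i x_i^2$. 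Summing this identity over $j = 1, \dots, s$ yields $\Exp{\|\Mx{S}\V{x}\|_2^2} = \|\V{x}\|_2^2$, as claimed.

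There is no genuine obstacle here — the result is a one-line second-moment computation — but the one point deserving care is the possibility of dividing by zero, which is exactly why the hypothesis ties the support of $\Vp$ to the support of $\V{x}$. It is also worth noting that independence of $\xi_1, \dots, \xi_s$ is never used: linearity of expectation alone suffices, so the conclusion holds for the sampling-with-replacement scheme of Definition~\ref{def:randsample}.
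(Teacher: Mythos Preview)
Your proof is correct. The paper does not actually give a proof of this lemma --- it explicitly leaves it ``as an exercise for the reader'' --- and your direct second-moment computation is precisely the standard argument the authors have in mind when they call it ``straightforward.''
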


\subsection{Leverage Scores and Approximate Leverage Score Sampling}
\label{sec:leverage-scores}
The distribution selected for $\Vp$ determines the quality of the estimate in a way that depends on the leverage scores of $\A$.
We use the presentation of leverage scores from Drineas et al.~\cite{DrMaMaWo12}.

\begin{definition}[Leverage Scores \cite{DrMaMaWo12}]\label{def:leverages_scores}
    Let $\A \in \Real^{N \times r}$ with $N > r$, and
    let $\UA \in \Real^{N \times r}$ be any orthogonal basis for the column space of $\A$ (such as the left singular vectors from the compact SVD).
    The \emph{leverage scores} of the rows of $\A$ are given by
    \begin{displaymath}
        \ell_i(\UA) = \|\UA(i,:)\|_2^2 \qtext{for all} i \in \set{1,\dots,N}.
    \end{displaymath}
    The \emph{coherence} is the maximum leverage score, denoted
    $\mu(\A) = \max_{i \in [N]} \ell_i(\A)$.
\end{definition}

\noindent The leverage scores indicate the relative importance of rows in the matrix $\A$.
It is known that $\ell_i(\A) \leq 1$ for all $i \in [N]$, $\sum_{i\in[N]} \ell_i(\A) = r$,  and $\mu(\A) \in [r/N,1]$ \cite{Wo14}.
The matrix $\A$ is called incoherent if $\mu(\A) \approx r/N$.
Since the leverage scores sum to $r$, sampling row $i$ according to its leverage score means it is sampled with probability $\ell_i(\A)/r$.
This distribution is the optimal distribution, but calculating true
leverage scores is as expensive as solving the original least squares problem. Thus, we generally work with \emph{approximate leverage scores}.

For any row sampling distribution $\V{p}$ we can measure the discrepancy between it and the sampling distribution defined by the leverage scores via the \textit{misestimation} factor $\beta \in (0, 1]$ such that
\begin{displaymath}
    p_i \geq \beta \; {\ell_i(\A)}/{r} \qtext{for all} i \in [N].
\end{displaymath}
This means that $p_i$ is at least a $\beta$-fraction of the optimal leverage score based sampling probability for all rows.
When $\beta = 1$, we have exact leverage score sampling.
When $\beta$ is small, the sampling distribution may be poor in that it undersamples important rows.

\subsection{First Structural Condition for Random Sampling}
\label{sec:sc1}

The first structural condition \cref{eq:sc1} is shown using a matrix Chernoff bound. We state precisely the Chernoff bound for completeness. (We only require the lower bound.)

\begin{theorem}[Matrix Chernoff~{\cite[Corollary of Theorem~5.1.1]{Tropp15}}]\label{thm:matrixChernof}
    Consider a finite sequence $\{\Mx{X}_k\}$ of independent, random, positive-semidefinite matrices
    of dimension $d$ satisfying $\lambda_{\max}(\Mx{X}_k) \leq L$ and $\sum_k \Exp\Mx{X}_k = \Mx{I}$. Then, for any $\varepsilon \in [0,1)$, we have
    \begin{displaymath}
        \Prob\left\{ \lambda_{\min}\left({\textstyle\sum_k \Mx{X}_k}\right) \leq (1-\varepsilon) \right\}
        \leq d \left(\frac{e^{-\varepsilon}}{(1-\varepsilon)^{1-\varepsilon}}\right)^{1/L}.
    \end{displaymath}
\end{theorem}

Using the Chernoff bound, we show  in \cref{lem:sc1}
that the first structural condition holds
with high probability for approximate leverage score sampling.
\Cref{rem:woodruff-compare} provides context for why the constants differ compared similar results in the literature, including those used in the original version of this artricle.
The proof is a straightforward application of the matrix Chernoff bound and is closely related to previous results; see \cref{rem:sc1-previous}.

\begin{lemma}[SC1 for Random Sampling]\label{lem:sc1}
    Consider $\A \in \R^{N \times r}$, its SVD $\UA \Mx{\Sigma}_{\A} \Mx{V}{\A}'$,
    and row leverage scores $\ell_i(\A)$.
    Let $\Vp$ be a probability distribution such that, for some positive $\beta \leq 1$,
    we have $p_i \geq \beta \ell_i(\A)/r$ for all $i \in [N]$.
    Let $\Mx{S} \in \R^{s \times N} \sim \RandSample(s,\Vp)$.
    Then,
    \begin{displaymath}
        s > C \frac{r}{\beta} \ln(r/\delta)
        \quad \Rightarrow \quad
        \Prob\left\{ \sigma_{\min}^2(\Mx{S} \UA) \geq 1/\sqrt{2} \right\} \geq 1-\delta
    \end{displaymath}
    where $C=\sqrt{2}/(\sqrt{2}-1-\log(2)/2) \approx 20.9080$.
\end{lemma}

\begin{proof}
    We will apply the matrix Chernoff bound (\cref{thm:matrixChernof}) with
    a particular choice of $\varepsilon$ to obtain the desired result.
    First, we set up the conditions to apply the result.

    Let $\{\xi^{(1)}, \ldots, \xi^{(s)}\}$ be the indices of the $s$ sampled rows, drawn according to $\Vp$. Define the positive semidefinite random matrices
    \begin{equation*}
        \Mx{X}_k = \frac{1}{s \, p_{\xi^{(k)}}} \UA(\xi^{(k)}, :)^{\trans} \UA(\xi^{(k)}, :) \in \R^{r \times r}.
    \end{equation*}
    Note that $\sum_{k=1}^s \Mx{X}_k = (\Mx{S} \UA)^{\trans} \Mx{S} \UA$ by construction of the sampling matrix $\Mx{S}$ in \cref{eq:S}, and that the expectation of this sum is $\Mx{I}_r$ since
    \begin{align*}
        \sum_{k=1}^s \Exp[\Mx{X}_k] & = \sum_{k=1}^s \sum_{j=1}^N p_j \frac{1}{s \, p_j} \UA(j, :)^{\trans} \UA(j, :)
        = \UA' \UA = \Mx{I}_r.
    \end{align*}
    Additionally,
    using the assumption $p_i \geq \beta \ell_i(\A) / r$,
    the eigenvalue bound is
    \begin{equation*}
        \lambda_{\max}(\Mx{X}_k) = \frac{\|\UA(\xi^{(k)}, :)\|_2^2}{s \, p_{\xi^{(k)}}}
        = \frac{\ell_{\xi^{(k)}}(\A)}{s \, p_{\xi^{(k)}}} \leq \frac{r}{\beta s}.
    \end{equation*}
    Hence, we satisfy the conditions to apply the matrix Chernoff bound with
    $L = r/(\beta s)$ and $\mu_{\min} = 1$,
    yielding the lower tail bound for $\varepsilon \in [0,1)$:
    \begin{equation*}
        \Prob\left\{ \lambda_{\min}((\Mx{S}\UA)^{\trans} \Mx{S}\UA) \leq 1 - \varepsilon \right\}
        \leq r \left( g(\varepsilon) \right)^{\beta s / r}
        \qtext{where}
        g(\varepsilon) \equiv \frac{e^{-\varepsilon}}{(1-\varepsilon)^{1-\varepsilon}}.
    \end{equation*}
    Setting the left hand side to be at most $\delta$,
    we can express the number of samples $s$ in terms of $\varepsilon$ and $\delta$
    by taking logs, multiplying both sides by $-1$ to flip the logarithms, and rearranging:
    \begin{displaymath}
        r \left( g(\varepsilon) \right)^{\beta s / r} \leq \delta
        \quad\Rightarrow\quad
        s \geq \frac{r}{\beta} \frac{\ln(r/\delta)}{\ln(g(\varepsilon)^{-1})}
        .
    \end{displaymath}
    Observe that $\sigma_{\min}^2(\Mx{S} \UA) = \lambda_{\min}((\Mx{S}\UA)^{\trans} \Mx{S}\UA)$.
    Setting $\varepsilon = 1 - 1/\sqrt{2}$ yields $\sigma_{\min}^2(\Mx{S} \UA) \geq 1/\sqrt{2}$
    with failure probability $\delta$.
    Since $\varepsilon = 1 - 1/\sqrt{2}$,
    we have $C=1/\ln(g(\varepsilon)^{-1}) =
        \sqrt{2}/(\sqrt{2}-1-\log(2)/2) \approx 20.9080$  to complete the proof.
\end{proof}

\begin{remark}[Improvement in Constant]\label{rem:woodruff-compare}
    The original version of this article \cite{arXiv-LaKo22} utilized a version of \cref{lem:sc1} from Woodruff~\cite[Theorem 2.11]{Wo14} that had a much higher constant. This resulted in a substantially looser bound which had appeared overly pessimistic compared to experimental results.
    This impacted several downstream results in works such as \cite{LaKo22, hayashi2025randomized}. Here we briefly explore how we were able to improve the constant.

    Under the same conditions as \cref{lem:sc1}, Woodruff \cite[Theorem 2.11]{Wo14} states:
    \begin{displaymath}
        s > 144 \frac{r}{\beta \epsilon^2} \ln(2r/\delta)
        \quad \Rightarrow \quad
        \Prob\left\{ 1- \epsilon \leq \sigma_{\min}^2(\Mx{S} \UA) \leq 1 + \epsilon \right\} \geq 1-\delta
    \end{displaymath}
    For the purposes of satisfying the first structural condition, we only need one side of the bound, so we can replace $2r$ by $r$ inside the log term.
    Setting $\epsilon = 1 -1/\sqrt{2}$, Woodruff \cite[Theorem 2.11]{Wo14} then gives:
    \begin{displaymath}
        s > \left [ \frac{144}{(1-1/\sqrt{2})^2} \right ] \frac{r}{\beta} \ln(r/\delta)
        \quad \Rightarrow \quad
        \Prob\left\{ \sigma_{\min}^2(\Mx{S} \UA) \geq 1/\sqrt{2} \right\} \geq 1-\delta.
    \end{displaymath}
    Inside the proof of Woodruff~\cite[Theorem 2.11]{Wo14} which closely follows the proof of Magdon-Ismail~\cite[Corollary 16]{magdon2010row}, one can directly obtain the tighter sample bound:
    \begin{displaymath}
        s > \left [ 2 \left ( 1 + \frac{\epsilon}{3} \right ) \right ] \frac{r}{\beta \epsilon^2} \ln(2r/\delta)
        \quad \Rightarrow \quad
        \Prob\left\{ 1- \epsilon \leq \sigma_{\min}^2(\Mx{S} \UA) \leq 1 + \epsilon \right\} \geq 1-\delta
    \end{displaymath}
    Using the same logic as above, setting $\epsilon = 1 -1/\sqrt{2}$, and replacing $2r$ by $r$ inside the log term gives:
    \begin{displaymath}
        s > \left [ \frac{2}{(1-1/\sqrt{2})^2} + \frac{1}{(1-1/\sqrt{2})} \right ] \frac{r}{\beta} \ln(r/\delta)
        \quad \Rightarrow \quad
        \Prob\left\{ \sigma_{\min}^2(\Mx{S} \UA) \geq 1/\sqrt{2} \right\} \geq 1-\delta
    \end{displaymath}

    \Cref{tab:compare} compares the constants across these results.
    The constant in Woodruff~\cite[Theorem 2.11]{Wo14} can be improved from $\approx 1678.5870$ to $\approx 26.7279$ by directly analyzing the proof of the theorem rather than the statement.
    Nevertheless, the constant in \cref{lem:sc1} is still an improvement over this improved version of Woodruff~\cite[Theorem 2.11]{Wo14} because that results employs a matrix Chernoff bound that does not assume positive-semidefiniteness.

\end{remark}

\begin{table}
    \caption{Comparison of constants across results. All results give $s > C \frac{r}{\beta} \ln(r/\delta)$
        implies $\Prob\left\{ \sigma_{\min}^2(\Mx{S} \UA) \geq 1/\sqrt{2} \right\} \geq 1-\delta$ with the value of $C$ specified in the table and under the conditions specified in \cref{lem:sc1}.}
    \label{tab:compare}
    \begin{tabular}{lcc}
        \toprule
        Source                              & Constant                                                                                     & Relative Size \\
        \midrule
        \cite[Theorem 2.11]{Wo14} Statement & $C = \left [ \frac{144}{(1-1/\sqrt{2})^2} \right ] \approx 1678.5870$                        & $80.28$       \\
        \cite[Theorem 2.11]{Wo14} Proof     & $C = \left [ \frac{2}{(1-1/\sqrt{2})^2} + \frac{1}{(1-1/\sqrt{2})} \right ] \approx 26.7279$ & $1.28$        \\
        \Cref{lem:sc1}                      & $C = \left [ \frac{\sqrt{2}}{\sqrt{2}-1-\log(2)/2} \right ] \approx 20.9080$                 & $1.00$        \\
        \bottomrule
    \end{tabular}
\end{table}

\begin{remark}[Previous Appearances in the Literature]\label{rem:sc1-previous}
    \Cref{lem:sc1} is closely related to the fact that approximate leverage scores provide an $\epsilon$-accurate subspace embedding, namely the lower bound on $\sigma_{\min}^2(\Mx{S}\UA)$. An analogous result appears in Proposition 6.1.1 and the related Appendix A.3 of Murray et al.~\cite{murray2023randomized}, which itself traces back to Tropp~\cite[Problem 5.13]{tropp2020randomized}.
\end{remark}

\subsection{Second Structural Condition for Random Sampling}
\label{sec:sc2}

The second structural condition \cref{eq:sc2} can be proven using results for randomized matrix-matrix multiplication.
Consider the matrix product $\UA' \Bperp$.
This projects the part of  the columns of $\Mx{B}$ outside of the column space of $\A$ onto the column space
of $\A$ and thus by definition is equal to the all zeros  matrix $\Mx{0}_{r \times n}$ (we have assumed $\rank(\A) = r$).
This condition requires us to bound how well the sampled product
$\UA' \Mx{S}' \Mx{S} \Bperp$ approximates the original product.
We can do this via the following lemma from Drineas, Kannan, and Mahoney \cite{drineas2006fast}.

\begin{lemma}[Sketched Matrix Product \cite{drineas2006fast}]\label{lemma:randMatMult}
    Consider two matrices $\A \in \R^{N \times m}$ and $\Mx{B} \in \R^{N \times p}$.
    Let the sketching matrix
    $\Mx{S} \in \R^{s \times N} \sim \RandSample(s,\Vp)$
    where $s$ denotes the number of samples and
    $\V{p} \in [0,1]^N$ is a probability distribution such that
    $p_k \geq \beta \|\A(k,:)\|_2^2 / \|\A\|_F^2$ for all $k \in [N]$
    for some fixed $\beta > 0$.
    We then have the following guarantee on the quality of the approximate product:
    \begin{displaymath}
        \E \left [  \| \A' \Mx{B} - (\Mx{S} \A)^{\trans} \Mx{S} \Mx{B} \|_F^2 \right ] \leq \frac{1}{\beta s} \|\A\|_F^2 \|\Mx{B}\|_F^2.
    \end{displaymath}
\end{lemma}

\begin{proof}
    Fix $i, j$ to specify an element of the matrix product and let $\{\xi^{(1)}, \ldots, \xi^{(s)}\}$
    be the indices of the sampled rows of $\A$ (and $\Mx{B}$).
    We begin by calculating the expected value and variance of the corresponding
    element of the sampled matrix product,
    i.e., $\left[ (\Mx{S} \A) ^{\trans} \Mx{S} \Mx{B} \right ]_{ij}$.
    This can be written in terms of scalar random variables $X_t$ for $t = 1, \ldots, s$ as follows:
    \begin{equation*}
        X_t = \frac{\A(\xi^{(t)}, i)^{\trans} \Mx{B}(\xi^{(t)}, j)}{s p_{\xi^{(t)}}}
        \quad \Longrightarrow \quad
        \left[ (\Mx{S} \A) ^{\trans} \Mx{S} \Mx{B} \right ]_{ij} = \sum_{t = 1}^s X_t.
    \end{equation*}
    The expectation of $X_t$ and $X_t^2$ for all $t$ can be calculated as follows:
    \begin{align*}
        \E[X_t]   & = \sum_{k = 1}^N  p_k \frac{\A_{ki} \Mx{B}_{kj}}{s p_k} = \frac{1}{s} (\A' \Mx{B})_{ij},                                   \\
        \E[X_t^2] & = \sum_{k = 1}^N  p_k \frac{\A_{ki}^2 \Mx{B}_{kj}^2}{s^2 p_k^2} = \sum_{k = 1}^N  \frac{\A_{ki}^2 \Mx{B}_{kj}^2}{s^2 p_k}.
    \end{align*}
    The relation between $X_t$ and $[(\Mx{S} \A) ^{\trans} \Mx{S} \Mx{B}]_{ij}$
    gives $\E \left [[ (\Mx{S} \A) ^{\trans} \Mx{S} \Mx{B} ]_{ij} \right] = \sum_{t = 1}^s \E[X_t] = (\A' \Mx{B})_{ij}$
    and hence the estimator is unbiased.
    Furthermore, since the estimated matrix element is the sum of $s$ independent random variables,
    its variance can be calculated as follows:
    \begin{align*}
        \Var \left [ [(\Mx{S} \A) ^{\trans} \Mx{S} \Mx{B}]_{ij} \right ] & = \Var \left [\sum_{t = 1}^s X_t  \right ] = \sum_{t = 1}^s \Var[X_t]
        = \sum_{t = 1}^s \left ( \E[X_t^2] - \E[X_t]^2 \right )                                                                                                                                              \\
                                                                         & = \sum_{t = 1}^s \left ( \sum_{k = 1}^N  p_k \frac{\A_{ki}^2 \Mx{B}_{kj}^2}{s^2 p_k^2} - \frac{1}{s^2} (\A' \Mx{B})_{ij} \right )
        = \sum_{k = 1}^N \frac{\A_{ki}^2 \Mx{B}_{kj}^2}{s p_k} - \frac{1}{s} (\A' \Mx{B})_{ij}.
    \end{align*}
    Now we turn to the expectation we want to bound and apply these results:
    \begin{align*}
        \E \left [  \| \A \Mx{B} - (\Mx{S} \A)^{\trans} \Mx{S} \Mx{B} \|_F^2 \right ]
         & = \sum_{i = 1}^m \sum_{j = 1}^p \E \left [ \left ( [(\Mx{S} \A) ^{\trans} \Mx{S} \Mx{B}]_{ij} - (\A' \Mx{B})_{ij} \right )^2 \right ]                                                     \\
         & = \sum_{i = 1}^m \sum_{j = 1}^p \E \left [ \left ( [(\Mx{S} \A) ^{\trans} \Mx{S} \Mx{B}]_{ij} - \E \big [[(\Mx{S} \A) ^{\trans} \Mx{S} \Mx{B}]_{ij} \big ] \right )^2 \right ]            \\
         & = \sum_{i = 1}^m \sum_{j = 1}^p \Var \left [ \big[(\Mx{S} \A) ^{\trans} \Mx{S} \Mx{B} \big]_{ij} \right ]
        = \sum_{i = 1}^m \sum_{j = 1}^p \left (\sum_{k = 1}^N \frac{\A_{ki}^2 \Mx{B}_{kj}^2}{s p_k} - \frac{1}{s} (\A' \Mx{B})_{ij} \right )                                                         \\
         & =   \sum_{k = 1}^N \frac{\left ( \sum_{i = 1}^m \A_{ki}^2 \right ) \left ( \sum_{j = 1}^p \Mx{B}_{kj}^2 \right )}{s p_k} -   \frac{1}{s} \sum_{i = 1}^m \sum_{j = 1}^p  (\A' \Mx{B})_{ij} \\
         & = \frac{1}{s} \sum_{k = 1}^N \frac{ \| \A(k,:)\|_2^2 \| \Mx{B}(k, :)\|_2^2}{p_k} -   \frac{1}{s} \| \A' \Mx{B} \|_F^2                                                                     \\
         & \leq \frac{1}{s} \sum_{k = 1}^N \frac{ \| \A(k,:)\|_2^2 \| \Mx{B}(k, :)\|_2^2}{p_k},
    \end{align*}
    where in the last line we have used that the Frobenius norm of a matrix is strictly positive.
    Lastly, we use our assumption on the probabilities $p_k \geq \frac{\beta \|\A(k,:)\|_2^2}{\|\A\|_F^2}$ to obtain the desired bound:
    \begin{align*}
        \E \left [  \| \A' \Mx{B} - (\Mx{S} \A)^{\trans} \Mx{S} \Mx{B} \|_F^2 \right ]
         & \leq \frac{1}{s} \sum_{k = 1}^N \frac{ \| \A(k,:)\|_2^2 \| \Mx{B}(k, :)\|_2^2}{p_k},                                                 \\
         & \leq \frac{1}{s} \sum_{k = 1}^N \left ( \|\A'\|_F^2 \frac{ \| \A(k,:)\|_2^2 \| \Mx{B}(k, :)\|_2^2 }{\beta \|\A(k,:)\|_2^2} \right ), \\
         & = \frac{1}{\beta s} \|\A\|_F^2 \sum_{k = 1}^N \| \Mx{B}(k, :)\|_2^2 = \frac{1}{\beta s} \|\A\|_F^2 \|\Mx{B}\|_F^2.
    \end{align*}
\end{proof}

\noindent We can apply \cref{lemma:randMatMult} to bound the probability of \cref{eq:sc2} holding.

\begin{lemma}[SC2 for Random Sampling]\label{lem:sc2}
    For the overdetermined least squares problem \cref{eq:lsq-sketched},
    assume $\A \in \R^{N \times r}$ is full rank, its SVD is $\UA \Mx{\Sigma}_{\A} \Mx{V}{\A}'$, and its row leverage scores are $\ell_i(\A)$.
    Define the probability distribution $\V{p} \in [0,1]^N$ and assume there exists $\beta \in (0,1]$
    such that $p_i \geq \beta \ell_i(\A)/r$ for all $i \in [N]$.
    Construct row sampling and rescaling matrix $\Mx{S} \in \R^{s \times N} \sim \RandSample(s,\Vp)$.
    Then
    \begin{displaymath}
        s \geq  \frac{ 2 r}{\beta \delta \epsilon }
        \quad \Rightarrow \quad
        \Prob \left\{
        \|\UA' \Mx{S}' \Mx{S} \Bperp \|_F^2 \leq \epsilon \resid^2/2
        \right\} \geq 1 - \delta.
    \end{displaymath}
\end{lemma}
\begin{proof}
    Apply \cref{lemma:randMatMult} to obtain a bound on the expected value:
    \begin{align*}
        \E \left [  \| \UA' \Mx{S}' \Mx{S} \Bperp \|_F^2 \right ]
         & = \E \left [  \| \Mx{0}_{r \times n} - \UA' \Mx{S}' \Mx{S} \Bperp \|_F^2 \right ]
        = \E \left [  \| \UA' \Bperp - \UA' \Mx{S}' \Mx{S} \Bperp \|_F^2 \right ]               \\
         & \leq \frac{1}{\beta s} \|\UA\|_F^2 \|\Bperp\|_F^2 = \frac{r}{\beta s} \|\Bperp\|_F^2
        \mbox{}=\frac{r}{\beta s} \mathcal{R}^2.
    \end{align*}
    Markov's inequality states that for non-negative random variable $X$ and scalar $t > 0$, we can bound the probability that $X \geq t$ as $\Prob [ X \geq t] \leq \E[X]/t$.
    We can apply this inequality to bound the probability that the sketching matrix violates \cref{eq:sc2}:
    \begin{align*}
        \Prob
        \left\{  \| \UA' \Mx{S}' \Mx{S} \Bperp \|_F^2  \geq \frac{\epsilon \|\Bperp\|_F^2}{2} \right\} & \leq \frac{ 2 \E \left [  \| \UA' \Mx{S}' \Mx{S} \Mx{\Bperp} \|_F^2 \right ]}{\epsilon \|\Bperp\|_F^2} \leq \frac{ 2 r}{\beta \epsilon s}
    \end{align*}
    where in the last step we have used our bound for the expected value. Thus if we set the right-hand side equal to $\delta$, we obtain that the probability that \cref{eq:sc2} holds is greater than or equal to $1 - \delta$ as desired.  Solving for $s$ yields that we thus must have $s \geq  \frac{ 2 r}{\beta \delta \epsilon }$.
\end{proof}

\subsection{Main Theorem for Random Sampling}
\label{sec:main-theorem}

We combine the above results to obtain the theorem below.
Note that this theorem is identical
to Larsen and Kolda~\cite[Theorem 6]{LaKo22} except that the constants are improved.

\begin{theorem}[Random Sampling]%
    \label{thm:sketching-app}
    Consider the least squares problem
    $\min_{\Mx{X} \in \Real^{r \times n}} \| \A \Mx{X} - \B\|^2$
    where $\A \in \Real^{N \times r}$ with $r \ll N$ and $\rank(\A) = r$
    and  $\B \in \Real^{N \times n}$.
    Let $\Vp \in [0,1]^N$ be a probability distribution and
    assume there exists a fixed
    $\beta \in (0,1]$ such that
    \begin{displaymath}
        p_i \geq \beta \frac{\ell_i(\A)}{r} \qtext{for all} i \in [N].
    \end{displaymath}
    For any  $\epsilon, \delta \in (0,1)$,
    set
    \begin{displaymath}
        s=({r}/{\beta}) \max \set{ C \log (2r/\delta), {4}/(\delta \epsilon) }
        \qtext{where}
        C=\sqrt{2}/(\sqrt{2}-1-\log(2)/2) \approx 20.9080   ,
    \end{displaymath}
    and let $\Mx{S} = \RandSample(s,\Vp)$.
    Define
    $\Xopt \equiv \arg \min_{\Mx{X} \in \Real^{r \times n}} \| \A \Mx{X} - \B\|^2$.
    Then $\Xtildeopt \equiv \arg \min_{\Mx{X} \in \Real^{r \times n}} \| \Mx{S} \A \Mx{X} - \Mx{S} \B\|_F^2$
    satisfies $\| \A \Xtildeopt - \B \|_F^2 \leq (1+\epsilon) \| \A \Xopt - \B \|_F^2$
    with probability at least $1-\delta$.
\end{theorem}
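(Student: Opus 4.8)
The plan is to bootstrap the deterministic guarantee of \cref{thm:sketch-bound} using the two high-probability facts \cref{lem:sc1} and \cref{lem:sc2}, combined by a union bound. The only real content is checking that the hypotheses line up and that the stated sample count $s$ simultaneously triggers both lemmas.

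First I would put the misestimation hypothesis into the two shapes needed downstream. Since $\sum_{i} \ell_i(\A) = r$, the leverage-score sampling distribution has entries $p_i(\Vl(\A)) = \ell_i(\A)/r$; and since $\UA$ has orthonormal columns, $\|\UA(i,:)\|_2^2 = \ell_i(\A)$ with $\|\UA\|_F^2 = r$. Hence the assumption $\beta \le p_i r/\ell_i(\A)$ is equivalent to $p_i \ge \beta\,\ell_i(\A)/r$, which is at once (i) the hypothesis $p_i(\overline{\Vl}(\A)) \ge \beta\, p_i(\Vl(\A))$ of \cref{lem:sc1} (reading $\Vp$ as the overestimate distribution), and (ii) the hypothesis $p_k \ge \beta\,\|\UA(k,:)\|_2^2/\|\UA\|_F^2$ of \cref{lemma:randMatMult} under the substitution $\A \leftarrow \UA$, $\B \leftarrow \Bperp$ used to prove \cref{lem:sc2}. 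I would also note that the matrix $\Mx{S} \sim \RandSample(s,\Vp)$ of \cref{def:randsample} is exactly the row sampling-and-rescaling operator appearing in both lemmas, and that $\rank(\A)=r$ guarantees $\UA^\trans \Bperp = \Mx{0}$, so \cref{lem:sc2} applies as stated.

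Next I would split the failure budget and invoke the two lemmas at level $\delta/2$ each. The first branch of the maximum, $s \ge (Cr/\beta)\log(r/\delta)$ with $C = 144/(1-1/\sqrt2)^2$, drives the hypothesis of \cref{lem:sc1}, so (taking $\epsilon = 1-1/\sqrt2$ there, i.e. $\sigma_{\min}^2(\Mx{S}\UA) \ge 1/\sqrt2$) \cref{eq:sc1} holds with probability $\ge 1-\delta/2$. The second branch, $s \ge r/(\beta\delta\epsilon)$, drives the hypothesis of \cref{lem:sc2} for the given $\epsilon$, so \cref{eq:sc2} holds with probability $\ge 1-\delta/2$. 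A union bound gives both structural conditions simultaneously with probability $\ge 1-\delta$, and then \cref{thm:sketch-bound} (applicable since $\epsilon\in(0,1)$) yields $\|\A\Xtildeopt-\B\|_F^2 \le (1+\epsilon)\|\A\Xopt-\B\|_F^2$ on that event, proving the theorem. I expect the main obstacle to be purely the constant/logarithm bookkeeping of this last paragraph: the clean $C\log(r/\delta)$ and $1/(\delta\epsilon)$ terms must be reconciled with the $\ln(2r/\delta')$ and $2r/(\beta\delta'\epsilon)$ appearing in \cref{lem:sc1,lem:sc2} once $\delta'=\delta/2$, which needs either a modest enlargement of $C$ (and of the $1/(\delta\epsilon)$ coefficient) or accepting a final failure probability of $2\delta$; none of the mathematics beyond this is new.
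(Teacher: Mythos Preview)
Your proposal is correct and matches the paper's proof essentially line for line: split the failure budget as $\delta/2+\delta/2$, invoke \cref{lem:sc1} and \cref{lem:sc2} on the two branches of the $\max$, union-bound, then feed \cref{eq:sc1,eq:sc2} into \cref{thm:sketch-bound}. The constant-bookkeeping mismatch you flag (the $\ln(2r/\delta')$ and $2r/(\beta\delta'\epsilon)$ versus the clean $\log(r/\delta)$ and $1/(\delta\epsilon)$ once $\delta'=\delta/2$) is real and the paper simply elides it, so you are not missing anything mathematical.
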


\begin{proof}
    Applying \cref{lem:sc1}, we have that \cref{eq:sc1} holds with probability $1-\delta/2$ if
    $s=C r \log (2r/\delta)/\beta$.
    Applying \cref{lem:sc2}, we have that \cref{eq:sc2} holds with probability $1-\delta/2$ if
    $s=4r/(\beta \delta \epsilon)$.
    Hence, a union bound says that \cref{eq:sc1,eq:sc2} both hold with probability $1-\delta$ if
    $s= (r/\beta) \max\set{ C \log(2r/\delta), 4/(\delta\epsilon)}$.
    Combining this with \cref{thm:sketch-bound} yields the result.
\end{proof}

\section{Hybrid Deterministic and Random Sampling}
\label{sec:hybrid-sampling}

When sampling probabilities are concentrated, i.e., a small subset of rows accounts for a significant portion of the probability mass, standard random sampling repeatedly selects the same high-probability rows.
This leads to inefficient sketches.
We review the hybrid approach proposed by Larsen and Kolda~\cite{LaKo22} that deterministically includes high-probability rows and randomly samples the remainder.
Then we extend the theory of Hayashi et al.~\cite{hayashi2025randomized} to describe
how this reduces the sample complexity as compared to standard
approximate leverage score sampling.
We use \cref{thm:sketch-bound} as we did for the random sampling case,
and the major difference is in showing that the two structural conditions hold for hybrid sampling. The final result is in \cref{sec:hybrid-main-theorem}, followed by a comparison to the standard sampling case in \cref{rem:hybrid-comparison}.

\subsection{Hybrid Sampling}
\label{sec:hybrid-sampling-def}

We first define the hybrid sampling procedure. The essential idea is that a few rows are included deterministically, while the rest are sampled randomly according to a rescaled probability distribution.
We will later see that the highest-probability rows should be chosen for deterministic inclusion.

\begin{definition}[Hybrid Sampling \cite{LaKo22}]\label{def:hybridsample}
    Let $\DetSet \subset [N]$ be an ordered set of indices to include deterministically.
    Define $\pdet = \sum_{i \in \DetSet} p_i$.
    We say $\Mx{S} \in \Real^{s \times N} \sim \HybridSample(s, \Vp, \DetSet)$ if the sampling matrix has the form
    \begin{displaymath}
        \Mx{S} = \begin{bmatrix} \Sdet \\ \Srnd \end{bmatrix}
    \end{displaymath}
    where $\Sdet \in \Real^{d \times N}$ picks out the $d \equiv |\DetSet|$ deterministic rows and
    $\Srnd \in \Real^{\tilde{s} \times N}$ randomly samples and reweights $\tilde{s} \equiv s - |\DetSet|$ rows.

    The matrix $\Sdet$ is constructed as follows.
    For each $j \in [d]$, let
    $k_j$ be the $j$th element of $\DetSet$.
    Then the $j$th row of $\Sdet$ is the $(k_j)$-unit vector in $\Real^N$, i.e., a vector of all zeros except for a 1 in position $k_j$.

    For the matrix $\Srnd$, let $\xi^{(j)} \sim \Multi(\bm{\tilde{p}})$ for each $j \in [\tilde{s}]$ where $\bm{\tilde{p}}$ is the rescaled probability distribution over $[N] \setminus \DetSet$ such that
    \begin{displaymath}
        \tilde{p}_i =
        \begin{cases}
            \frac{p_i}{1-\pdet} & \text{if } i \in [N] \setminus \DetSet, \\
            0                   & \text{if } i \in \DetSet.
        \end{cases}
    \end{displaymath}
    Then we define the entries of $\Srnd$ as
    \begin{displaymath}
        \Srnd(j,i) =
        \begin{cases}
            \sqrt{\frac{1}{\tilde{s} \, \tilde{p}_i}} & \text{if } \xi^{(j)} = i, \\
            0                                         & \text{otherwise},
        \end{cases}
        \qtext{for all} (j,i) \in [\tilde{s}] \times [N].
    \end{displaymath}
\end{definition}

We can see that this hybrid sampling is unbiased in the following lemma.

\begin{lemma}\label{lem:hybrid-expectation}
    Let $\V{x} \in \Real^N$ and let $\Vp \in [0,1]^N$ be a probability distribution such that $p_i > 0$ if $x_i \neq 0$.
    Let $\DetSet \subset [N]$ be any subset of indices.
    Let $\Mx{S} \sim \HybridSample(s, \Vp, \DetSet)$.
    Then the hybrid sampling is unbiased, i.e.,
    \begin{displaymath}
        \Exp{ \| \Mx{S} \V{x} \|_2^2 } = \|\V{x}\|_2^2.
    \end{displaymath}
\end{lemma}

\begin{proof}
    Split the norm into deterministic and random parts:
    \begin{displaymath}
        \| \Mx{S} \V{x} \|_2^2 = \| \Sdet \V{x} \|_2^2 + \| \Srnd \V{x} \|_2^2.
    \end{displaymath}
    For the deterministic part, $\| \Sdet \V{x} \|_2^2 = \sum_{i \in \DetSet} x_i^2$.
    For the random part, each sample $\xi^{(j)}$ selects index $i \in [N] \setminus \DetSet$ with probability $p_i / (1-\pdet)$:
    \begin{displaymath}
        \Exp{ \| \Srnd \V{x} \|_2^2 }
        = \Exp{ \sum_{j=1}^{\tilde{s}} \frac{x_{\xi^{(j)}}^2}{\tilde{s} \tilde{p}_{\xi^{(j)}}} }
        = \sum_{j=1}^{\tilde{s}} \sum_{i \notin \DetSet} \frac{p_i}{1-\pdet} \cdot \frac{1-\pdet}{\tilde{s} \, p_i}  x_i^2
        = \sum_{j=1}^{\tilde{s}} \frac{1}{\tilde{s}} \sum_{i \notin \DetSet} x_i^2 = \sum_{i \notin \DetSet} x_i^2.
    \end{displaymath}
    Combining both parts gives the result.
\end{proof}

\subsection{First Structural Condition for Hybrid Sampling}
\label{sec:hybrid-sc1}

We begin with a hybrid version of \cref{lem:sc1}. Comparing the two theorems, we see that a factor of $r/\beta$ in the sample complexity is replaced by $(1-\pdet) r/\beta + 1$. In other words, the sample complexity is reduced by a factor of roughly $(1-\pdet)$.

\begin{lemma}[Hybrid SC1]\label{lem:hybrid-sc1}
    Consider full rank $\A \in \R^{N \times r}$, its SVD $\UA \Mx{\Sigma}_{\A} \Mx{V}{\A}'$, and row leverage scores $\ell_i(\A)$.
    Let $\DetSet \subset [N]$ be a deterministic index set.
    Let $\Vp$ be a probability distribution such that $p_i \geq \beta \ell_i(\A)/r$ for all $i \in [N]$ and some $\beta \in (0,1]$.
    Construct the hybrid sampling matrix $\Mx{S} \sim \HybridSample(s, \Vp, \DetSet)$.
    Then,
    \begin{displaymath}
        \tilde{s} \geq C\left((1-\pdet) \frac{r}{\beta} + 1\right)\ln(r/\delta)
        \quad \Rightarrow \quad
        \Pr\left\{
        \sigma_{\min}^2(\Mx{S}\UA) \geq 1/\sqrt{2}
        \right\} \geq 1-\delta,
    \end{displaymath}
    where $C = \sqrt{2}/(\sqrt{2}-1-\log(2)/2) \approx 20.9080$.
\end{lemma}

\begin{proof}
    Define $\Mx[\tilde]{U}_{\A}$ to be a copy of $\UA$ with rows indexed by $\DetSet$ zeroed out and $\Mx{U}{\det}$ to be another copy but with the rows not in $\DetSet$ zeroed out.
    These two matrices thus decompose $\Mx{U}_{\A}$ into a sum of two matrices of the same size but with non-overlapping non-zero rows, i.e. $\Mx{U}_{\A} = \Mx{U}{\det} + \Mx[\tilde]{U}_{\A}$.
    The hybrid sketch then admits a similar decomposition and satisfies
    \begin{displaymath}
        (\Mx{S}\UA)^{\trans}\Mx{S}\UA = \Mx{U}{\det}'\Mx{U}{\det} + \Mx[\tilde]{U}{\A}'\Srnd'\Srnd\Mx[\tilde]{U}_{\A},
    \end{displaymath}

    Define the random matrices
    \begin{displaymath}
        \Mx{W}_k = \frac{1}{\tilde{s}\tilde{p}_{\xi^{(k)}}} \Mx[\tilde]{U}_{\A}(\xi^{(k)},:)^{\trans}\Mx[\tilde]{U}_{\A}(\xi^{(k)},:) \in \R^{r \times r},
    \end{displaymath}
    where $\xi^{(k)}$ is sampled according to $\tilde{p}_i = p_i/(1-\pdet)$ for $i \notin \DetSet$.
    Then $\sum_{k=1}^{\tilde{s}} \Mx{W}_k = \Mx[\tilde]{U}{\A}'\Srnd'\Srnd\Mx[\tilde]{U}_{\A}$ and
    \begin{displaymath}
        \sum_{k=1}^{\tilde{s}} \E[\Mx{W}_k] = \sum_{k=1}^{\tilde{s}} \sum_{i \notin \DetSet} \tilde{p}_i \cdot \frac{1}{\tilde{s}\tilde{p}_i} \Mx[\tilde]{U}_{\A}(i,:)^{\trans}\Mx[\tilde]{U}_{\A}(i,:) = \Mx[\tilde]{U}{\A}'\Mx[\tilde]{U}_{\A}. %
    \end{displaymath}
    By construction, $\Mx[\tilde]{U}_{\A}$ and $\Mx{U}{\det}$ have non-overlapping non-zero rows and thus $\Mx[\tilde]{U}_{\A}^{\trans}\Mx{U}{\det} = \Mx{U}{\det}^{\trans}\Mx[\tilde]{U}_{\A} = \Mx{0}$. As $\Mx{U}_{\A}$ is orthonormal this means:
    \begin{displaymath}
        \Mx{I}_r = \Mx{U}_{\A}^{\trans}\Mx{U}_{\A} = (\Mx{U}{\det} + \Mx[\tilde]{U}_{\A})^{\trans}(\Mx{U}{\det} + \Mx[\tilde]{U}_{\A}) = \Mx{U}{\det}'\Mx{U}{\det} + \Mx[\tilde]{U}{\A}'\Mx[\tilde]{U}_{\A},
    \end{displaymath}
    and $\sum_{k=1}^{\tilde{s}} \E[\Mx{W}_k] = \Mx[\tilde]{U}{\A}'\Mx[\tilde]{U}_{\A} = \Mx{I}_r - \Mx{U}{\det}'\Mx{U}{\det}$.

    Now define the augmented random matrices that include the deterministic contribution:
    \begin{displaymath}
        \Mx{X}_k = \Mx{W}_k + \frac{1}{\tilde{s}}\Mx{U}{\det}'\Mx{U}{\det} \qtext{for} k = 1,\ldots,\tilde{s}.
    \end{displaymath}
    Then $\sum_{k=1}^{\tilde{s}} \Mx{X}_k = \Mx{U}{\det}'\Mx{U}{\det} + \Mx[\tilde]{U}{\A}'\Srnd'\Srnd\Mx[\tilde]{U}_{\A} = (\Mx{S}\UA)^{\trans}\Mx{S}\UA$ and:
    \begin{displaymath}
        \sum_{k=1}^{\tilde{s}} \E[\Mx{X}_k] = \left [ \sum_{k=1}^{\tilde{s}} \E[\Mx{W}_k] \right ] + \Mx{U}{\det}'\Mx{U}{\det} = \Mx{I}_r - \Mx{U}{\det}'\Mx{U}{\det} + \Mx{U}{\det}'\Mx{U}{\det} = \Mx{I}_r.
    \end{displaymath}

    We can thus apply the matrix Chernoff bound (\cref{thm:matrixChernof}) provided we bound the maximum eigenvalue. For $\Mx{W}_k$ we have:
    \begin{displaymath}
        \lambda_{\max}(\Mx{W}_k) = \frac{\ell_{\xi^{(k)}}(\A)}{\tilde{s}\tilde{p}_{\xi^{(k)}}}.
    \end{displaymath}
    Since $\tilde{p}_i = p_i/(1-\pdet) \geq \beta\ell_i(\A)/(r(1-\pdet))$, we have
    \begin{displaymath}
        \lambda_{\max}(\Mx{W}_k) \leq \frac{r(1-\pdet)}{\beta\tilde{s}}.
    \end{displaymath}
    By Weyl's inequality~{\cite[Theorem~4.3.1]{HoJo85}}, $\lambda_{\max}(\Mx{A}+\Mx{B}) \leq \lambda_{\max}(\Mx{A}) + \lambda_{\max}(\Mx{B})$ for symmetric matrices, so
    \begin{displaymath}
        \lambda_{\max}(\Mx{X}_k) \leq \lambda_{\max}(\Mx{W_k}) + \lambda_{\max}\left(\frac{1}{\tilde{s}}\Mx{U}{\det}'\Mx{U}{\det}\right) \leq \frac{r(1-\pdet)}{\beta\tilde{s}} + \frac{1}{\tilde{s}} \lambda_{\max}\left(\Mx{U}{\det}'\Mx{U}{\det}\right).
    \end{displaymath}
    From before, we have that $\Mx{U}{\det}'\Mx{U}{\det} = \Mx{I}_r - \Mx[\tilde]{U}{\A}'\Mx[\tilde]{U}_{\A}$ and thus every eigenvector of $\Mx[\tilde]{U}{\A}'\Mx[\tilde]{U}_{\A}$ with eigenvalue $\lambda_i(\Mx[\tilde]{U}{\A}'\Mx[\tilde]{U}_{\A})$ will also be an eigenvector of $\Mx{U}{\det}'\Mx{U}{\det}$ with eigenvalue $1 - \lambda_i(\Mx[\tilde]{U}{\A}'\Mx[\tilde]{U}_{\A})$. Since both $\Mx[\tilde]{U}{\A}'\Mx[\tilde]{U}_{\A}$ and $\Mx{U}{\det}'\Mx{U}{\det}$ are PSD, this implies $\lambda_i(\Mx{U}{\det}'\Mx{U}{\det}) \in [0,1]$ for all $i \in [r]$, and in particular $\lambda_{\max}(\Mx{U}{\det}'\Mx{U}{\det}) \leq 1$:
    \begin{displaymath}
        \lambda_{\max}(\Mx{X}_k) \leq \frac{r(1-\pdet)}{\beta\tilde{s}} + \frac{1}{\tilde{s}} = \frac{r(1-\pdet) + \beta}{\beta\tilde{s}},
    \end{displaymath}
    so $L = (r(1-\pdet) + \beta)/(\beta\tilde{s})$.

    Applying the matrix Chernoff bound with $\varepsilon = 1 - 1/\sqrt{2}$:
    \begin{displaymath}
        \Prob\left\{\lambda_{\min}((\Mx{S}\UA)^{\trans}\Mx{S}\UA) \leq 1/\sqrt{2}\right\}
        \leq r \cdot g(\varepsilon)^{\mu_{\min}/L}
        = r \cdot g(\varepsilon)^{\beta\tilde{s}/(r(1-\pdet)+\beta)}.
    \end{displaymath}
    Setting this equal to $\delta$ and solving:
    \begin{displaymath}
        \tilde{s} \geq \frac{r(1-\pdet)+\beta}{\beta} \cdot \frac{\ln(r/\delta)}{\ln(g(\varepsilon)^{-1})} = C\left(\frac{r(1-\pdet)}{\beta} + 1\right) \ln(r/\delta),
    \end{displaymath}
    where $C = 1/\ln(g(\varepsilon)^{-1}) \approx 20.9080$.
\end{proof}

We can further simplify the sample complexity if we assume that $r(1-\pdet) \geq \beta$.
This is a reasonable assumption. Generally, $r$ is somewhat large and $\pdet$ is not that close to 1, so that $r(1-\pdet)$ is significantly larger than 1. Since $\beta \leq 1$, the assumption follows easily. The simplification is a gross overestimate but makes the expression easier to interpret in the final result.

\begin{corollary}[SC1 Simplification]\label{cor:hybrid-sc1}
    Under the assumptions of \cref{lem:hybrid-sc1}, if we assume $r(1-\pdet) \geq \beta$, then we have
    \begin{displaymath}
        \tilde{s} \geq 2 C (1-\pdet) \frac{r}{\beta} \ln(r/\delta)
        \quad \Rightarrow \quad
        \Pr\left\{
        \sigma_{\min}^2(\Mx{S}\UA) \geq 1/\sqrt{2}
        \right\} \geq 1-\delta.
    \end{displaymath}
\end{corollary}

\subsection{Second Structural Condition for Hybrid Sampling}
\label{sec:hybrid-sc2}

Next, we present a hybrid version of \cref{lemma:randMatMult} that accounts for the deterministic and random parts separately
and shows that the result depends only on the random part.
Note, however, that there is a different assumption on the sampling probabilities relating to only the rows not in the deterministic set.

\begin{lemma}[Hybrid Random Matrix Multiplication]\label{lemma:hybridRandMatMult}
    Consider two matrices $\A \in \R^{N \times m}$ and $\Mx{B} \in \R^{N \times p}$.
    Let $\DetSet \subset [N]$ be a set of indices to be included deterministically.
    Let $\Mx{S} \sim \HybridSample(s, \Vp, \DetSet)$.
    Define $\Mx[\tilde]{A}$ and $\Mx[\tilde]{B}$ to zero out the rows of $\A$ and $\Mx{B}$ indexed by $\DetSet$.
    Assume there exists $\tilde{\beta} > 0$ such that  $\tilde{p}_i \geq \tilde{\beta} \|\Mx[\tilde]{A}(i,:)\|_2^2 / \|\Mx[\tilde]{A}\|_F^2$ for all $i \in [N] \setminus \DetSet$.
    Then, we have
    \begin{displaymath}
        \E \left[ \| \A' \Mx{B} - (\Mx{S} \A)^{\trans} \Mx{S} \Mx{B} \|_F^2 \right] \leq \frac{1}{\tilde{\beta} \tilde{s}} \|\Mx[\tilde]{A}\|_F^2 \|\Mx[\tilde]{B}\|_F^2.
    \end{displaymath}
\end{lemma}

\begin{proof}
    From \cref{def:hybridsample}, we have
    \begin{displaymath}
        (\Mx{S} \A)^{\trans} \Mx{S} \Mx{B}
        = (\Sdet \A)^{\trans} \Sdet \Mx{B} + (\Srnd \Mx[\tilde]{A})^{\trans} \Srnd \Mx[\tilde]{B}
        = \sum_{i \in \DetSet} \A(i,:)^{\trans} \Mx{B}(i,:) + \sum_{t=1}^{\tilde{s}} \frac{1}{\tilde{s} \tilde{p}_{\xi^{(t)}}} \Mx[\tilde]{A}(\xi^{(t)},:)^{\trans} \Mx[\tilde]{B}(\xi^{(t)},:),
    \end{displaymath}
    where $\{\xi^{(1)}, \ldots, \xi^{(\tilde{s})}\}$ are drawn according to the rescaled distribution $\tilde{p}_i = p_i/(1-\pdet)$ for $i \in [N] \setminus \DetSet$. The definitions of $\Mx[\tilde]{A}$ and $\Mx[\tilde]{B}$ mean that $\Srnd \A = \Srnd \Mx[\tilde]{A}$ and $\Srnd \B = \Srnd \Mx[\tilde]{B}$.

    Because the deterministic part exactly recovers the corresponding rows of $\A' \Mx{B}$, we have
    \begin{displaymath}
        \A' \Mx{B} - (\Mx{S} \A)^{\trans} \Mx{S} \Mx{B} = \Mx[\tilde]{A}' \Mx[\tilde]{B} - (\Srnd \Mx[\tilde]{A})^{\trans} \Srnd \Mx[\tilde]{B},
    \end{displaymath}
    and it suffices to bound the expectation involving only the random part.
    Following the proof of \cref{lemma:randMatMult}, fix indices $i, j$ and define
    $X_t = \frac{1}{\tilde{s} \tilde{p}_{\xi^{(t)}}} \Mx[\tilde]{A}(\xi^{(t)}, i) \Mx[\tilde]{B}(\xi^{(t)}, j)$.
    The expectation is
    \begin{displaymath}
        \E[X_t] = \sum_{k \notin \DetSet} \tilde{p}_k \frac{1}{\tilde{s} \tilde{p}_k} \Mx[\tilde]{A}(k,i) \Mx[\tilde]{B}(k,j) = \frac{1}{\tilde{s}} (\Mx[\tilde]{A}' \Mx[\tilde]{B})_{ij},
    \end{displaymath}
    confirming the estimator is unbiased.

    The variance calculation yields:
    \begin{displaymath}
        \Var\left[\sum_{t=1}^{\tilde{s}} X_t\right] = \sum_{k \notin \DetSet} \frac{\Mx[\tilde]{A}(k,i)^2 \Mx[\tilde]{B}(k,j)^2}{\tilde{s} \tilde{p}_k} - \frac{1}{\tilde{s}}(\Mx[\tilde]{A}' \Mx[\tilde]{B})_{ij}^2.
    \end{displaymath}
    Summing over all $i,j$:
    \begin{align*}
        \E\left[\|\Mx[\tilde]{A}' \Mx[\tilde]{B} - (\Srnd \Mx[\tilde]{A})^{\trans} \Srnd \Mx[\tilde]{B}\|_F^2\right]
         & = \sum_{i=1}^m \sum_{j=1}^p \Var\left[\sum_{t=1}^{\tilde{s}} X_t\right]                                                                                                                \\
         & = \frac{1}{\tilde{s}} \sum_{k \notin \DetSet} \frac{\|\Mx[\tilde]{A}(k,:)\|_2^2 \|\Mx[\tilde]{B}(k,:)\|_2^2}{\tilde{p}_k} - \frac{1}{\tilde{s}} \|\Mx[\tilde]{A}' \Mx[\tilde]{B}\|_F^2 \\
         & \leq \frac{1}{\tilde{s}} \sum_{k \notin \DetSet} \frac{\|\Mx[\tilde]{A}(k,:)\|_2^2 \|\Mx[\tilde]{B}(k,:)\|_2^2}{\tilde{p}_k}.
    \end{align*}
    Now applying the assumption $\tilde{p}_k \geq \tilde{\beta} \|\Mx[\tilde]{A}(k,:)\|_2^2/\|\Mx[\tilde]{A}\|_F^2$:
    \begin{align*}
        \E\left[\|\Mx[\tilde]{A}' \Mx[\tilde]{B} - (\Srnd \Mx[\tilde]{A})^{\trans} \Srnd \Mx[\tilde]{B}\|_F^2\right]
         & \leq \frac{1}{\tilde{s}} \sum_{k \notin \DetSet} \frac{\|\Mx[\tilde]{A}\|_F^2}{\tilde{\beta}} \|\Mx[\tilde]{B}(k,:)\|_2^2 \\
         & = \frac{\|\Mx[\tilde]{A}\|_F^2}{\tilde{\beta} \tilde{s}} \sum_{k \notin \DetSet} \|\Mx[\tilde]{B}(k,:)\|_2^2
        = \frac{\|\Mx[\tilde]{A}\|_F^2 \|\Mx[\tilde]{B}\|_F^2}{\tilde{\beta} \tilde{s}}.
    \end{align*}
\end{proof}

We now apply \cref{lemma:hybridRandMatMult} to prove the hybrid version of \cref{lem:sc2}.
Observe here that we have a modified condition on the sampling probabilities. The quantity $\nu \leq r$ represents the remaining leverage score mass not covered by the deterministic set.
We relate this back to our original probability assumption in \cref{cor:hybrid-sc2}.

\begin{lemma}[Hybrid SC2]\label{lem:hybrid-sc2}
    Consider full rank $\A \in \R^{N \times r}$, its SVD $\UA \Mx{\Sigma}_{\A} \Mx{V}{\A}'$, and row leverage scores $\ell_i(\A)$.
    Let $\DetSet \subset [N]$ be a deterministic index set.
    Define $\nu = \sum_{i \notin \DetSet} \ell_i(\A)$.
    Construct the hybrid sampling matrix $\Mx{S} \sim \HybridSample(s, \Vp, \DetSet)$.
    Let $\pdet = \sum_{i \in \DetSet} p_i$ and define the rescaled probabilities $\tilde{p}_i = p_i/(1-\pdet)$ for $i \notin \DetSet$.
    Assume there exists $\tilde{\beta} \in (0,1]$ such that $\tilde{p}_i \geq \tilde{\beta} \ell_i(\A)/\nu$ for all $i \notin \DetSet$.
    Then,
    \begin{displaymath}
        \tilde{s} \geq \frac{2\nu}{\tilde{\beta} \delta \epsilon}
        \quad \Rightarrow \quad
        \Pr\left\{ \| \UA' \Mx{S}' \Mx{S} \Bperp \|_F^2 \leq \frac{\epsilon \resid^2}{2} \right\} \geq 1-\delta.
    \end{displaymath}
\end{lemma}

\begin{proof}
    Define $\Mx[\tilde]{U}_{\A}$ and $\Mx[\tilde]{B}'$ to be copies of $\UA$ and $\Bperp$ with rows indexed by $\DetSet$ zeroed out (so all indexing remains consistent with the original matrices).
    With hybrid sampling, the sketch matrix satisfies $\Mx{S}' \Mx{S} = \Sdet' \Sdet + \Srnd' \Srnd$,
    where $\Sdet' \Sdet$ is a diagonal matrix with ones at positions in $\DetSet$.

    Since $\UA' \Bperp = \Mx{0}_{r \times n}$, we can decompose:
    \begin{align*}
        \UA' \Mx{S}' \Mx{S} \Bperp
         & = \UA' \Sdet' \Sdet \Bperp + \UA' \Srnd' \Srnd \Bperp                                                    \\
         & = \sum_{i \in \DetSet} \UA(i,:)^{\trans} \Bperp(i,:) + \Mx[\tilde]{U}{\A}' \Srnd' \Srnd \Mx[\tilde]{B}'.
    \end{align*}

    Using $\UA' \Bperp = \Mx{0}$, we have $\sum_{i \in \DetSet} \UA(i,:)^{\trans} \Bperp(i,:) = -\Mx[\tilde]{U}{\A}' \Mx[\tilde]{B}'$.
    Therefore:
    \begin{displaymath}
        \UA' \Mx{S}' \Mx{S} \Bperp = \Mx[\tilde]{U}{\A}' \Srnd' \Srnd \Mx[\tilde]{B}' - \Mx[\tilde]{U}{\A}' \Mx[\tilde]{B}'.
    \end{displaymath}

    Apply \cref{lemma:hybridRandMatMult} with $\A = \Mx[\tilde]{U}_{\A}$ and $\Mx{B} = \Mx[\tilde]{B}'$.
    Note that $\|\Mx[\tilde]{U}_{\A}\|_F^2 = \sum_{i \notin \DetSet} \ell_i(\A) = \nu$ and $\|\Mx[\tilde]{B}\|_F^2 \leq \|\Bperp\|_F^2 = \resid^2$.
    Since $\ell_i(\A) = \|\Mx[\tilde]{U}_{\A}(i,:)\|_2^2$ for $i \notin \DetSet$, the assumption $\tilde{p}_i \geq \tilde{\beta} \ell_i(\A)/\nu$ is equivalent to $\tilde{p}_i \geq \tilde{\beta} \|\Mx[\tilde]{U}_{\A}(i,:)\|_2^2/\|\Mx[\tilde]{U}_{\A}\|_F^2$.
    Thus:
    \begin{displaymath}
        \E\left[\|\UA' \Mx{S}' \Mx{S} \Bperp\|_F^2\right] \leq \frac{\nu}{\tilde{\beta} \tilde{s}} \|\Mx[\tilde]{B}\|_F^2 \leq \frac{\nu \resid^2}{\tilde{\beta} \tilde{s}}.
    \end{displaymath}

    Applying Markov's inequality:
    \begin{displaymath}
        \Prob\left[\|\UA' \Mx{S}' \Mx{S} \Bperp\|_F^2 \geq \frac{\epsilon \resid^2}{2}\right]
        \leq \frac{2\nu}{\tilde{\beta} \epsilon \tilde{s}}.
    \end{displaymath}
    Setting the right-hand side equal to $\delta$ and solving for $\tilde{s}$ yields $\tilde{s} \geq \frac{2\nu}{\tilde{\beta} \delta \epsilon}$.
\end{proof}

\begin{corollary}[Hybrid SC2 in terms of $\beta$]\label{cor:hybrid-sc2}
    Under the same setup as \cref{lem:hybrid-sc2}, if $p_i \geq \beta \ell_i(\A)/r$ for all $i \in [N]$ and some $\beta \in (0,1]$, then
    \begin{displaymath}
        \tilde{s} \geq \frac{2r(1-\pdet)}{\beta \delta \epsilon}
        \quad \Rightarrow \quad
        \Pr\left\{ \| \UA' \Mx{S}' \Mx{S} \Bperp \|_F^2 \leq \frac{\epsilon \resid^2}{2} \right\} \geq 1-\delta.
    \end{displaymath}
\end{corollary}

\begin{proof}
    The assumption $p_i \geq \beta \ell_i(\A)/r$ implies $\tilde{p}_i = p_i/(1-\pdet) \geq \beta \ell_i(\A)/(r(1-\pdet))$.
    Define $\tilde{\beta} = \beta \nu/(r(1-\pdet))$.
    Then $\tilde{p}_i \geq \tilde{\beta} \ell_i(\A)/\nu$, satisfying the hypothesis of \cref{lem:hybrid-sc2}.
    The result follows since
    \begin{displaymath}
        \tilde{s} \geq \frac{2\nu}{\tilde{\beta} \delta \epsilon} = \frac{2\nu  r(1-\pdet)}{\beta \nu \delta \epsilon} = \frac{2r(1-\pdet)}{\beta \delta \epsilon}.
    \end{displaymath}
\end{proof}

\subsection{Main Theorem for Hybrid Sampling}
\label{sec:hybrid-main-theorem}

The following theorem is analogous to \cref{thm:sketching-app} for hybrid leverage score sampling. It uses the hybrid versions of the structural conditions from \cref{lem:hybrid-sc1} and \cref{cor:hybrid-sc2}.

\begin{theorem}[Hybrid sampling]\label{thm:hybrid-sample}
    Consider the least squares problem
    $\min_{\Mx{X} \in \Real^{r \times n}} \| \A \Mx{X} - \B\|^2$
    where $\A \in \Real^{N \times r}$ with $r \ll N$ and $\rank(\A) = r$
    and  $\B \in \Real^{N \times n}$.
    Let $\Vp \in [0,1]^N$ be a probability distribution and
    assume there exists a fixed
    $\beta \in (0,1]$ such that
    \begin{displaymath}
        p_i \geq \beta \frac{\ell_i(\A)}{r} \qtext{for all} i \in [N].
    \end{displaymath}
    Let $\DetSet \subset [N]$ be a deterministic index set with $d = |\DetSet|$ elements.
    For any  $\epsilon, \delta \in (0,1)$,
    set
    \begin{displaymath}
        s = d + \tilde{s}
        \qtext{with}
        \tilde{s} = (1-\pdet) \frac{r}{\beta} \max\set{ 2C \ln(2r/\delta), \frac{4}{\delta\epsilon} },
    \end{displaymath}
    where $C = \sqrt{2}/(\sqrt{2}-1-\log(2)/2) \approx 20.9080$.
    Let $\Mx{S} \sim \HybridSample(s, \Vp, \DetSet)$.
    Define
    $\Xopt \equiv \arg \min_{\Mx{X} \in \Real^{r \times n}} \| \A \Mx{X} - \B\|^2$.
    Then $\Xtildeopt \equiv \arg \min_{\Mx{X} \in \Real^{r \times n}} \| \Mx{S} \A \Mx{X} - \Mx{S} \B\|_F^2$
    satisfies:
    \begin{displaymath}
        \| \A \Xtildeopt - \Mx{B} \|_F^2 \leq (1+\epsilon) \| \A \Xopt - \Mx{B} \|_F^2
    \end{displaymath}
    with probability at least $1-\delta$.
\end{theorem}

\begin{proof}
    By \cref{thm:sketch-bound}, it suffices to verify that the structural conditions \cref{eq:sc1,eq:sc2} both hold with probability at least $1-\delta$.
    We show each holds with probability at least $1-\delta/2$, then apply a union bound.

    For SC1, \cref{cor:hybrid-sc1} guarantees $\sigma_{\min}^2(\Mx{S}\UA) \geq 1/\sqrt{2}$ with probability at least $1-\delta/2$ when
    \begin{displaymath}
        \tilde{s} \geq 2C\frac{r(1-\pdet)}{\beta} \ln(2r/\delta).
    \end{displaymath}

    For SC2, \cref{cor:hybrid-sc2} guarantees $\|\UA'\Mx{S}'\Mx{S}\Bperp\|_F^2 \leq \epsilon \resid^2/2$ with probability at least $1-\delta/2$ when $\tilde{s} \geq 4r(1-\pdet)/(\beta \delta \epsilon)$.

    Taking $\tilde{s}$ to be the maximum of the SC1 and SC2 bounds ensures both conditions hold simultaneously with probability at least $1-\delta$.
\end{proof}

\begin{remark}[Comparison with standard sampling]
    \label{rem:hybrid-comparison}
    Standard leverage score sampling (\cref{thm:sketching-app}) sets
    \begin{displaymath}
        s = \frac{r}{\beta} \max\set{C\ln(2r/\delta), \frac{4}{\delta\epsilon}}
    \end{displaymath}
    versus, in \cref{thm:hybrid-sample},
    \begin{displaymath}
        s = \textcolor{red}{d} + \textcolor{red}{(1-\pdet)} \frac{r}{\beta} \max\set{\textcolor{red}{2}C \ln(2r/\delta), \frac{4}{\delta\epsilon}},
    \end{displaymath}
    with the differences highlighted in red.
    We generally assume $\varepsilon$ is small enough that the second term in the maximum dominates. So, the hybrid sampling reduced the random samples by a factor of $(1-\pdet)$, at the cost of $d$ deterministic rows.
    Thus, hybrid is beneficial if
    \begin{displaymath}
        d + (1-\pdet) \frac{4r}{\beta \delta \epsilon} \ll \frac{4r}{\beta \delta \epsilon}
        \quad \Leftrightarrow \quad
        d \ll \pdet \frac{4r}{\beta \delta \epsilon}.
    \end{displaymath}
\end{remark}

\begin{remark}[Comparison to Results of Hayashi et al.~\cite{hayashi2025randomized}]
    Hayashi et al.~\cite{hayashi2025randomized} analyze hybrid
    sampling for \emph{exact} leverage scores.
    Their key idea was splitting $\UA$ into two parts
    corresponding to the deterministic and random rows.
    We use this key idea to extend
    their results to the case of approximate leverage scores.
    In addition, we provided an updated constant for both results.
\end{remark}

\appendix
\section{Acknowledgments}%
AI tools were used in the development of the results in this manuscript,
especially in providing ideas for the proofs that improved the constants. 
The authors assume responsibility for all content.

\bibliographystyle{siamplainmod}

\clearpage
\end{document}